\providecommand{\U}[1]{\protect\rule{.1in}{.1in}}
\newtheorem{theorem}{Theorem}
\theoremstyle{plain}
\newtheorem{definition}{Definition}
\newtheorem{lemma}{Lemma}
\newtheorem{problem}{Problem}
\newtheorem{remark}{Remark}
\numberwithin{equation}{section}
\begin{document}
\title[ ]{A note on the Choquet type operators}
\author{Sorin G. Gal}
\address{Department of Mathematics and Computer Science\\
University of Oradea\\
University\ Street No. 1, Oradea, 410087, Romania}
\email{galso@uoradea.ro, galsorin23@gmail.com}
\author{Constantin P. Niculescu}
\address{Department of Mathematics, University of Craiova\\
Craiova 200585, Romania}
\email{constantin.p.niculescu@gmail.com}
\thanks{Appears in \emph{Aequat. Math.} Published online April 11, 2021. }
\date{April 12, 2021}
\subjclass[2000]{41A35, 41A36, 47H07}
\keywords{Choquet integral, monotone operator, sublinear operator, comonotone additive
operator, H\"{o}lder's inequality, Cauchy-Bunyakovsky-Schwarz inequality,
Bernstein-Kantorovich-Choquet operator}

\begin{abstract}
In this note Choquet type operators are introduced in connection with
Choquet's theory of integrability with respect to a not necessarily additive
set function. Based on their properties, a quantitative estimate for the
nonlinear Korovkin type approximation theorem associated to
Bernstein-Kantorovich-Choquet operators is proved. The paper also includes a
large generalization of H\"{o}lder's inequality within the framework of
monotone and sublinear operators acting on spaces of continuous functions.

\end{abstract}
\maketitle

\section{Introduction}

Choquet's theory of integrability (as described by Denneberg \cite{Denn},
Grabisch \cite{Gr2016} and Wang and Klir \cite{WK}) emphasizes the importance
of a new class of nonlinear operators that verify a mix of conditions
characteristic of Choquet's integral. Its technical definition is detailed as follows.

Given a Hausdorff topological space $X,$ we will denote by $\mathcal{F}(X)$
the vector lattice of all real-valued functions defined on $X$ endowed with
the pointwise ordering. Two important vector sublattices of it are
\[
C(X)=\left\{  f\in\mathcal{F}(X):\text{ }f\text{ continuous}\right\}
\]
and
\[
C_{b}(X)=\left\{  f\in\mathcal{F}(X):\text{ }f\text{ continuous and
bounded}\right\}  .
\]
With respect to the sup norm, $C_{b}(X)$ becomes a Banach lattice. See
\cite{Sch1974} for the theory of these spaces.

As is well known, all norms on the $N$-dimensional real vector space
$\mathbb{R}^{N}$ are equivalent. See Bhatia \cite{Bhatia2009}, Theorem 13, p.
16. When endowed with the sup norm and the coordinate-wise ordering,
$\mathbb{R}^{N}$ can be identified (algebraically, isometrically and in order)
with the space $C\left(  \left\{  1,...,N\right\}  \right)  $, where $\left\{
1,...,N\right\}  $ carries the discrete topology.

Suppose that $X$ and $Y$ are two Hausdorff topological spaces and $E$ and $F$
are respectively ordered vector subspaces of $\mathcal{F}(X)$ and
$\mathcal{F}(Y).$ An operator $T:E\rightarrow F$ is said to be a \emph{Choquet
type operator }(respectively a\emph{ Choquet type functional when
}$F=\mathbb{R}$) if it satisfies the following three conditions:

\begin{enumerate}
\item[(Ch1)] (\emph{Sublinearity}) $T$ is subadditive and positively
homogeneous, that is,%
\[
T(f+g)\leq T(f)+T(g)\quad\text{and}\quad T(af)=aT(f)
\]
for all $f,g$ in $E$ and $a\geq0;$

\item[(Ch2)] (\emph{Comonotone additivity}) $T(f+g)=T(f)+T(g)$ whenever the
functions $f,g\in E$ are comonotone in the sense that
\[
(f(s)-f(t))\cdot(g(s)-g(t))\geq0\text{ for all }s,t\in X;
\]

\item[(Ch3)] (\emph{Monotonicity}) $f\leq g$ in $E$ implies $T(f)\leq T(g).$
\end{enumerate}

All the aforementioned conditions are independent of each other.

If a nonlinear operator $T$ is monotone and positively homogeneous then
necessarily%
\[
T(0)=0\text{ and }f\geq0\text{ implies }T(f)\geq0;
\]
the converse works for linear operators but not in the general case.

The Choquet integral associated to a vector capacity with values in
$\mathbb{R}^{N}$ is a natural source of Choquet type operators. See Remark
\ref{rem3}. For more examples (important in approximation theory) see
\cite{GN2020}, where the following extension of Korovkin's approximation
theorem to the framework of Choquet type operators was proved.

\begin{theorem}
\label{thm1} \emph{(The nonlinear extension of Korovkin's theorem: the several
variables case)} Suppose that $X$ is a locally compact subset of the Euclidean
space $\mathbb{R}^{N}$ and $E$ is a vector sublattice of $\mathcal{F}(X)$ that
contains the $2N+2$ test functions $1,~\pm\operatorname*{pr}_{1}%
,...,~\pm\operatorname*{pr}_{N}$ and $\sum_{k=1}^{N}\operatorname*{pr}_{k}%
^{2}$. $($\emph{Here} $\operatorname*{pr}\nolimits_{k}:(x_{1},...,x_{N}%
)\rightarrow x_{k}$ $(k=1,...,N)$ \emph{denote the} \emph{canonical
projections on} $\mathbb{R}^{N})$.

$(i)$ If $(T_{n})_{n}$ is a sequence of monotone and sublinear operators from
$E$ into $E$ such that
\[
\lim_{n\rightarrow\infty}T_{n}(f)=f\text{\quad uniformly on the compact
subsets of }X
\]
for each of the $2N+2$ aforementioned test functions, then the above limit
property also holds for all nonnegative functions $f$ in $E\cap C_{b}(X)$.

$(ii)$ If, in addition, each operator $T_{n}$ is comonotone additive, then
$(T_{n}(f))_{n}$ converges to $f$ uniformly on the compact subsets of $X$, for
every $f\in E\cap C_{b}\left(  X\right)  $.

Notice that in both cases $(i)$ and $(ii)$ the family of testing functions can
be reduced to $1,~-\operatorname*{pr}_{1},...,~-\operatorname*{pr}_{N}$ and
$\sum_{k=1}^{N}\operatorname*{pr}_{k}^{2}$ when $K$ is included in the
positive cone of $\mathbb{R}^{N}$. Also, the convergence of $(T_{n}(f))_{n}$
to $f$ $\ $is uniform on $X$ when $f\in E$ is uniformly continuous and bounded
on $X.$
\end{theorem}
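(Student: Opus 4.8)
The plan is to imitate the classical Korovkin argument, replacing the two places where one normally invokes linearity by a combination of monotonicity, subadditivity and positive homogeneity. Fix a compact set $K\subseteq X$; it suffices to prove that $T_{n}(f)(x_{0})\to f(x_{0})$ uniformly for $x_{0}\in K$. For each $x_{0}$ I would introduce the auxiliary function
\[
\psi_{x_{0}}(x)=\Vert x-x_{0}\Vert^{2}=\sum_{k=1}^{N}\operatorname{pr}_{k}^{2}-2\sum_{k=1}^{N}x_{0,k}\operatorname{pr}_{k}+\Vert x_{0}\Vert^{2}\cdot 1,
\]
which lies in $E$ (being a linear combination of the test functions), is nonnegative, and vanishes at $x_{0}$.

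The pivotal step, which I would isolate as a lemma, is that $T_{n}(\psi_{x_{0}})(x_{0})\to 0$ uniformly on $K$. Since positive homogeneity only allows nonnegative scalars, I would first absorb the signs: writing $-2x_{0,k}\operatorname{pr}_{k}=2|x_{0,k}|\,g_{k}$ with $g_{k}=-\operatorname{sgn}(x_{0,k})\operatorname{pr}_{k}\in\{\pm\operatorname{pr}_{k}\}$ exhibits $\psi_{x_{0}}$ as a \emph{nonnegative} combination of the test functions $\sum_{k}\operatorname{pr}_{k}^{2}$, $1$ and $\pm\operatorname{pr}_{k}$. Subadditivity together with positive homogeneity then bound $T_{n}(\psi_{x_{0}})(x_{0})$ by the same combination of the quantities $T_{n}(\text{test})(x_{0})$; each of these converges uniformly on $K$ to the value of the corresponding test function at $x_{0}$ (the coefficients $\Vert x_{0}\Vert^{2}$ and $2|x_{0,k}|$ being bounded on the compact $K$), and the sign bookkeeping makes the limit equal to $\Vert x_{0}\Vert^{2}+\Vert x_{0}\Vert^{2}-2\Vert x_{0}\Vert^{2}=0$. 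Since $\psi_{x_{0}}\geq 0$ forces $T_{n}(\psi_{x_{0}})\geq T_{n}(0)=0$, a squeeze yields the claim.

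For part $(i)$, fix a nonnegative $f\in E\cap C_{b}(X)$ and $\varepsilon>0$. Uniform continuity on a compact neighbourhood of $K$ together with boundedness furnishes $\delta>0$ with $|f-f(x_{0})|\leq\varepsilon\cdot 1+M\psi_{x_{0}}$ on $X$, where $M=2\Vert f\Vert_{\infty}/\delta^{2}$. The upper estimate is direct: apply monotonicity to $f\leq(f(x_{0})+\varepsilon)\cdot 1+M\psi_{x_{0}}$, then subadditivity and positive homogeneity (legitimate since $f(x_{0})+\varepsilon\geq 0$ and $M\geq 0$), evaluate at $x_{0}$, and let $n\to\infty$ and $\varepsilon\to 0$ to obtain $\limsup_{n}T_{n}(f)(x_{0})\leq f(x_{0})$. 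The lower estimate is the main obstacle, because subadditivity points the wrong way for a direct lower bound. I would instead apply monotonicity to $f(x_{0})\cdot 1\leq f+h$, with $h=\varepsilon\cdot 1+M\psi_{x_{0}}\geq 0$, and then use subadditivity in the form $T_{n}(f+h)\leq T_{n}(f)+T_{n}(h)$; rearranging gives
\[
T_{n}(f)\geq f(x_{0})T_{n}(1)-\varepsilon T_{n}(1)-M\,T_{n}(\psi_{x_{0}}),
\]
whence $\liminf_{n}T_{n}(f)(x_{0})\geq f(x_{0})$. Together the two estimates prove uniform convergence on $K$ for nonnegative $f$.

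For part $(ii)$ I would remove the sign restriction by translation. Given an arbitrary $f\in E\cap C_{b}(X)$, put $c=\Vert f\Vert_{\infty}$, so that $f+c\cdot 1\geq 0$. Every function is comonotone with a constant, so comonotone additivity gives $T_{n}(f+c\cdot 1)=T_{n}(f)+c\,T_{n}(1)$; applying part $(i)$ to $f+c\cdot 1$ and subtracting $c\,T_{n}(1)\to c$ yields $T_{n}(f)(x_{0})\to f(x_{0})$ uniformly on $K$. Finally, the two closing remarks follow from the same estimates: if $K$ lies in the positive cone then every $x_{0,k}\geq 0$, so $g_{k}=-\operatorname{pr}_{k}$ throughout and only $1,-\operatorname{pr}_{1},\dots,-\operatorname{pr}_{N},\sum_{k}\operatorname{pr}_{k}^{2}$ enter the argument; and if $f$ is uniformly continuous and bounded on all of $X$, then $\delta$ (hence $M$) may be chosen independently of $x_{0}$, so the pointwise estimate becomes uniform in $x_{0}\in X$ and the convergence is uniform on $X$.
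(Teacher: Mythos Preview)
The paper does not actually prove Theorem~\ref{thm1}; it is quoted from the authors' earlier work \cite{GN2020} and used here only as background for the quantitative estimate in Theorem~\ref{thm4}. So there is no proof in the present paper to compare your argument against.

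That said, your argument is correct and is precisely the natural adaptation of the classical Korovkin scheme to the monotone--sublinear setting. The two places where linearity is normally used are handled exactly as they must be: (a) the decomposition $\psi_{x_{0}}=\sum_{k}\operatorname{pr}_{k}^{2}+\sum_{k}2|x_{0,k}|(\mp\operatorname{pr}_{k})+\|x_{0}\|^{2}\cdot 1$ with \emph{nonnegative} coefficients so that subadditivity and positive homogeneity apply, and (b) the lower estimate via $f(x_{0})\cdot 1\leq f+\varepsilon\cdot 1+M\psi_{x_{0}}$ followed by monotonicity and then subadditivity on the right-hand side, which is where the restriction $f\geq 0$ in part~$(i)$ is genuinely needed (to pull out the scalar $f(x_{0})\geq 0$). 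The passage to arbitrary sign in part~$(ii)$ by translating with a constant and invoking comonotone additivity is also the standard device. One small point worth making explicit in a final write-up: the uniformity of $T_{n}(\psi_{x_{0}})(x_{0})\to 0$ over $x_{0}\in K$ uses that $g_{k}\in\{\operatorname{pr}_{k},-\operatorname{pr}_{k}\}$ takes only two values, so one may bound $|T_{n}(g_{k})(x_{0})-g_{k}(x_{0})|$ by the maximum of the two errors $|T_{n}(\pm\operatorname{pr}_{k})(x_{0})\mp\operatorname{pr}_{k}(x_{0})|$, each of which tends to $0$ uniformly on $K$ by hypothesis.
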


In this paper we prove a quantitative estimate concerning the above
Korovkin-type theorem in the case of Bernstein-Kantorovich-Choquet operators
but our argument works also for the Sz\'{a}sz-Mirakjan-Kan\-torovich-Choquet
operators, the Baskakov-Kan\-to\-ro\-vich-Cho\-quet operators etc. See Theorem
\ref{thm4}, which is based on a generalization of the
Cauchy-Bunyakovsky-Schwarz inequality for Choquet type operators (stated as
Lemma 1).

A large generalization of H\"{o}lder's inequality within the framework of
monotone and sublinear operators acting on spaces of continuous functions
makes the objective of Theorem 3.

For the convenience of the reader, we devoted Section 2 to an overview of
basic facts about monotone capacities and the Choquet integral.

\section{Preliminaries on Choquet's Integral}

Given a nonempty set $X,$ by a \emph{lattice} of subsets of $X$ we mean any
collection $\Sigma$ of subsets that contains $\emptyset$ and $X$ and is closed
under finite intersections and unions. A lattice $\Sigma$ is an \emph{algebra}
if in addition it is closed under complementation. An algebra closed under
countable unions and intersections is called a $\sigma$-algebra.

Of special interest is the case where $X$ is a compact Hausdorff space and
$\Sigma$ is either the lattice $\Sigma_{up}^{+}(X)$ of all upper contour
closed sets $S=\left\{  x\in X:f(x)\geq t\right\}  ,$ or the lattice
$\Sigma_{up}^{-}(X)$ of all upper contour open sets $S=\left\{  x\in
X:f(x)>t\right\}  $ associated to pairs $f\in C(X)$ and $t\in\mathbb{R}.$

When $X$ is a compact metrizable space, $\Sigma_{up}^{+}(X)$ coincides with
the lattice of all closed subsets of $X$ (and $\Sigma_{up}^{-}(X)$ coincides
with the lattice of all open subsets of $X$).

In what follows $\Sigma$ denotes a lattice of subsets of an abstract set $X$.

\begin{definition}
\label{defcap}A set function $\mu:\Sigma\rightarrow\lbrack0,\infty)$ is called
a capacity if it verifies the following two conditions:

$(C1)$ $\mu(\emptyset)=0;$ and

$(C2)~\mu(A)\leq\mu(B)$ for all $A,B\in\Sigma$, with $A\subset B$
\emph{(}monotonicity\emph{)}.

The capacity $\mu$ is called normalized if $\mu(X)=1.$
\end{definition}

If $\Sigma$ is an algebra of subsets of $X,$ then to every capacity $\mu$
defined on $\Sigma$, one can attach a new capacity $\overline{\mu}$, the
\emph{dual} of $\mu,$ which is defined by the formula
\[
\overline{\mu}(A)=\mu(X)-\mu(X\setminus A).
\]
Notice that $\overline{\left(  \bar{\mu}\right)  }=\mu.$

The capacities provide a non additive generalization of \emph{probability
measures}, that is, of capacities $\mu$ having the property of $\sigma
$-additivity,%
\[
\mu\left(
{\displaystyle\bigcup\nolimits_{n=1}^{\infty}}
A_{n}\right)  =%
{\displaystyle\sum\nolimits_{n=1}^{\infty}}
\mu(A_{n})
\]
for every sequence $A_{1},A_{2},A_{3},...$ of disjoint sets belonging to
$\Sigma$ such that $\cup_{n=1}^{\infty}A_{n}\in\Sigma.$

Some other classes of capacities exhibiting extensions of the properties of
additivity or $\sigma$-additivity are listed below.

A capacity $\mu$ is called \emph{submodular} (or strongly subadditive) if%
\begin{equation}
\mu(A\cup B)+\mu(A\cap B)\leq\mu(A)+\mu(B)\text{\quad for all }A,B\in\Sigma.
\label{submod}%
\end{equation}
Every additive measure is also submodular, but the converse fails. A
normalized submodular capacity $\mu$ defined on an algebra $\Sigma$ of sets
has the property%
\begin{equation}
\mu(A)=0\text{ implies }\mu(\complement A)=1. \label{eqnullsets}%
\end{equation}

A capacity $\mu$ is called \emph{lower continuous\ }(or continuous by
ascending sequences)\emph{ }if%
\[
\lim_{n\rightarrow\infty}\mu(A_{n})=\mu(%
{\displaystyle\bigcup\nolimits_{n=1}^{\infty}}
A_{n})
\]
for every nondecreasing sequence $(A_{n})_{n}$ of sets in $\Sigma$ such that
$\cup_{n=1}^{\infty}A_{n}\in\Sigma;$ $\mu$ is called \emph{upper
continuous\ }(or continuous by descending sequences) if $\lim_{n\rightarrow
\infty}\mu(A_{n})=\mu\left(  \cap_{n=1}^{\infty}A_{n}\right)  $ for every
nonincreasing sequence $(A_{n})_{n}$ of sets in $\Sigma$ such that $\cap
_{n=1}^{\infty}A_{n}\in\Sigma.$ If $\mu$ is an additive capacity defined on a
$\sigma$-algebra, then its upper/lower continuity is equivalent to the
property of $\sigma$-additivity.

If $\Sigma$ is a $\sigma$-algebra, then a capacity $\mu:\Sigma\rightarrow
\lbrack0,1]$ is lower (upper continuous) if and only if its dual $\bar{\mu}$
is upper (lower) continuous.

There are several standard procedures to attach to a probability measure
certain not necessarily additive capacities. So is the case of \emph{distorted
probabilities,} $\mu(A)=u(P(A)),$ obtained from a given probability measure
$P:\Sigma\rightarrow\lbrack0,1]$ and applying to it a distortion
$u:[0,1]\rightarrow\lbrack0,1],$ that is, a nondecreasing and continuous
function such that $u(0)=0$ and $u(1)=1.$ For example, one may chose
$u(t)=t^{a}$ with $\alpha>0.$ When the distortion $u$ is concave (for example,
when $u(t)=t^{a}$ with $0<\alpha<1$ or when $u(t)=\frac{2t}{t+1}$), then $\mu$
is an example of lower continuous submodular capacity.

The following concept of integrability with respect to a capacity $\mu
:\Sigma\rightarrow\lbrack0,\infty)$ was introduced by Choquet \cite{Ch1954},
\cite{Ch1986}. It concerns the class of \emph{upper} \emph{measurable}
functions, that is, the functions $f:X\rightarrow\mathbb{R}$ such that all
upper contour sets $\left\{  x\in X:f(x)\geq t\right\}  $ belong to $\Sigma$.

\begin{definition}
\label{def2} The Choquet integral of an upper measurable function $f$ on a set
$A\in\Sigma$ is defined as the sum of two Riemann improper integrals,
\begin{multline*}
(C)\int_{A}f\mathrm{d}\mu\\
=\int_{0}^{+\infty}\mu\left(  \{x\in A:f(x)\geq t\}\right)  \mathrm{d}%
t+\int_{-\infty}^{0}\left[  \mu\left(  \{x\in A:f(x)\geq t\}\right)
-\mu(A)\right]  \mathrm{d}t.
\end{multline*}
Accordingly, $f$ is said to be Choquet integrable if both integrals above are finite.
\end{definition}

Every upper measurable and bounded function is Choquet integrable. If $f\geq
0$, then the last integral in the formula appearing in Definition \ref{def2}
is 0.

When $\Sigma$ is a $\sigma$-algebra, the upper measurability and the Borel
measurability are equivalent and the Choquet integral coincides with the
Lebesgue integral for $\sigma$-additive measures. Besides, the inequality sign
$\geq$ in the above two integrands can be replaced by $>;$ see \cite{WK},
Theorem 11.1, p. 226.

The next remarks summarize the basic properties of the Choquet integral:

\begin{remark}
\label{rem1}$(a)$ If $f$ and $g$ are two upper measurable functions which are
Choquet integrable, then
\begin{gather*}
f\geq0\text{ implies }(\operatorname*{C})\int_{X}f\mathrm{d}\mu\geq0\text{
\quad\emph{(}positivity\emph{)}}\\
f\leq g\text{ implies }\left(  \operatorname*{C}\right)  \int_{X}%
f\mathrm{d}\mu\leq\left(  \operatorname*{C}\right)  \int_{X}g\mathrm{d}%
\mu\text{ \quad\emph{(}monotonicity\emph{)}}\\
\left(  \operatorname*{C}\right)  \int_{X}af\mathrm{d}\mu=a\cdot\left(
\operatorname*{C}\right)  \int_{X}f\mathrm{d}\mu\text{ for all }a\geq0\text{
\quad\emph{(}positive\emph{ }homogeneity\emph{)}}\\
\left(  \operatorname*{C}\right)  \int_{X}1\cdot\mathrm{d}\mu(t)=\mu
(X)\text{\quad\emph{(}calibration\emph{).}}%
\end{gather*}
$(b)$ In general, the Choquet integral is not additive but \emph{(}as was
noticed by Dellacherie \emph{\cite{Del1970})}, if $f$ and $g$ are comonotonic
\emph{(}that is, $(f(\omega)-f(\omega^{\prime}))\cdot(g(\omega)-g(\omega
^{\prime}))\geq0$, for all $\omega,\omega^{\prime}\in X$\emph{), }then
\[
\left(  \operatorname*{C}\right)  \int_{X}(f+g)\mathrm{d}\mu=\left(
\operatorname*{C}\right)  \int_{X}f\mathrm{d}\mu+\left(  \operatorname*{C}%
\right)  \int_{X}g\mathrm{d}\mu.
\]
An immediate consequence is the property of translation invariance,
\[
\left(  \operatorname*{C}\right)  \int_{X}(f+c)\mathrm{d}\mu=\left(
\operatorname*{C}\right)  \int_{X}f\mathrm{d}\mu+c\cdot\mu(X)
\]
for all $c\in\mathbb{R}$ and all Choquet integrable functions $f$ $.$

$(c)$ If $\mu$ is a lower continuous capacity, then the Choquet integral is
lower continuous in the sense that
\[
\lim_{n\rightarrow\infty}\left(  \left(  C\right)  \int_{X}f_{n}\mathrm{d}%
\mu\right)  =\left(  C\right)  \int_{X}f\mathrm{d}\mu,
\]
whenever $(f_{n})_{n}$ is a nondecreasing sequence of bounded random variables
that converges pointwise to the bounded variable $f.$

For $(a)$ and $(b)$, see Denneberg\emph{ \cite{Denn}, }Proposition\emph{ 5.1,
}p\emph{. 64; (c) }follows in a straightforward way from the definition of the
Choquet integral.

$(d)$ If $\mu\leq\nu$ are two capacities, then $(C)\int_{X}f\mathrm{d}\mu
\leq(C)\int_{X}f\mathrm{d}\nu$, for all nonnegative measurable functions $f.$

$(e)$ $(C)\int_{A}-f\mathrm{d}\mu=-(C)\int_{A}f\mathrm{d}\overline{\mu}$. See
\emph{\cite{WK}}, Theorem \ \emph{11.7}, p. \emph{233}.
\end{remark}

\begin{remark}
\label{rem2}\emph{(The Subadditivity Theorem) }If $\mu$ is a submodular
capacity, then\ the associated Choquet integral is subadditive, that is,%
\[
\left(  \operatorname*{C}\right)  \int_{X}(f+g)\mathrm{d}\mu\leq\left(
\operatorname*{C}\right)  \int_{X}f\mathrm{d}\mu+\left(  \operatorname*{C}%
\right)  \int_{X}g\mathrm{d}\mu
\]
for all $f$ and $g$ integrable on $X.$ See\emph{ \cite{Denn}, }Theorem\emph{
6.3, }p\emph{. 75. }In addition, the following two integral analogs of the
modulus inequality hold true,
\[
|(\operatorname*{C})\int_{X}f\mathrm{d}\mu|\leq(\operatorname*{C})\int
_{X}|f|\mathrm{d}\mu
\]
and
\[
|(\operatorname*{C})\int_{X}f\mathrm{d}\mu-(\operatorname*{C})\int
_{X}g\mathrm{d}\mu|\leq(\operatorname*{C})\int_{X}|f-g|\mathrm{d}\mu.
\]
The last assertion is covered by Corollary \emph{6.6}, p. \emph{82}, in
\emph{\cite{Denn}. }
\end{remark}

\begin{remark}
\label{rem2bis}If $\mu$ is a submodular capacity, then\ the associated Choquet
integral is a submodular functional in the sense that
\[
\left(  C\right)  \int_{A}\sup\left\{  f,g\right\}  \mathrm{d}\mu+\left(
C\right)  \int_{A}\inf\{f,g\}\mathrm{d}\mu\leq\left(  C\right)  \int
_{A}f\mathrm{d}\mu+(C)\int_{A}g\mathrm{d}\mu
\]
for all $f$ and $g$ integrable on $X.$ For this, integrate term by term the
inequality
\begin{multline*}
\mu\left(  \left\{  x:\sup\{f,g\}(x)\geq t\right\}  \right)  +\mu\left(
\left\{  x:\inf\{f,g\}(x)\geq t\right\}  \right) \\
\leq\mu\left(  \left\{  x:f(x)\geq t\right\}  \right)  +\mu\left(  \left\{
x:g(x)\geq t\right\}  \right)  .
\end{multline*}

\end{remark}

The Choquet integral associated to any lower continuous capacity is a
comonotonically additive, monotone and lower continuous functional. The
converse also holds.

\begin{theorem}
\label{thm2}Suppose that $X$ is a compact Hausdorff space and
$I:C(X)\rightarrow\mathbb{R}$ is a comonotonically additive and monotone
functional such that $I(1)=1$. Then $I$ is also lower continuous and there
exists a unique lower continuous normalized capacity $\mu:\Sigma_{up}%
^{-}(X)\rightarrow\lbrack0,1]$ such that
\[
I(f)=\int_{0}^{+\infty}\mu\left(  \{x\in X:f(x)>t\}\right)  \mathrm{d}%
t+\int_{-\infty}^{0}\left[  \mu\left(  \{x\in X:f(x)>t\}\right)  -1\right]
\mathrm{d}t
\]
for all $f\in C(X).$ Moreover, if $I$ is submodular in the sense that%
\[
I(\sup\left\{  {f,g}\right\}  )+I(\inf\left\{  {f,g}\right\}  )\leq
I(f)+I(g)\text{\quad for all }f,g\in C(X),
\]
then $\mu$ is submodular too.
\end{theorem}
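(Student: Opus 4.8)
The plan is to extract the linear-like features hidden in the hypotheses, then build the capacity directly from $I$ by a subordination formula, and finally recover the Choquet representation through an \emph{exact} comonotone decomposition of continuous functions. First I would record the elementary consequences. Since all constants are pairwise comonotone, $I$ restricted to the constants is additive and monotone with $I(1)=1$, whence $I(c)=c$ for every $c\in\mathbb{R}$; comonotone additivity of $f$ with a constant then gives the translation law $I(f+c)=I(f)+c$. Using that $f$ is comonotone with itself one gets $I(nf)=nI(f)$, and writing $f=(f-\min f)+\min f$ yields positive homogeneity $I(af)=aI(f)$ for $a\ge 0$ (first for $f\ge 0$, the passage from rationals to reals using monotonicity and $I(f)\ge 0$, then in general via the translation law). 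The translation law also gives the Lipschitz estimate $|I(f)-I(g)|\le\|f-g\|_{\infty}$, so $I$ is continuous for uniform convergence. Lower continuity of $I$ is then immediate: if $f_n\uparrow f$ pointwise in $C(X)$, Dini's theorem (where compactness of $X$ is essential) makes the convergence uniform, and the Lipschitz estimate forces $I(f_n)\to I(f)$.

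Next I would define the capacity directly from $I$, avoiding dependence on any particular level-set representation, by setting, for $U\in\Sigma_{up}^{-}(X)$,
\[
\mu(U)=\sup\{I(h):h\in C(X),\ 0\le h\le 1,\ h=0\text{ on }X\setminus U\}.
\]
Monotonicity, $\mu(\emptyset)=0$ and $\mu(X)=I(1)=1$ are clear, and $\mu$ takes values in $[0,1]$. The point requiring care is lower continuity of $\mu$: given $U_n\uparrow U$ and an admissible $h$ for $U$, the truncations $h_n=(h-1/n)^{+}$ vanish outside the compact set $\{h\ge 1/n\}\subset U$, which a compactness argument places inside some $U_{m(n)}$; since $\|h-h_n\|_{\infty}\le 1/n$, the Lipschitz estimate gives $I(h)=\lim_n I(h_n)\le\lim_m\mu(U_m)$, and taking the supremum over $h$ yields $\mu(U)\le\lim_m\mu(U_m)$.

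The heart of the proof, and the step I expect to be the main obstacle, is the integral representation. For $f\in C(X)$ with $m\le f\le M$ I would use, for a partition $m=t_0<\dots<t_n=M$, the exact decomposition
\[
f=t_0+\sum_{k=1}^{n}(t_k-t_{k-1})\,\phi_k,\qquad \phi_k=\min\Big\{\tfrac{(f-t_{k-1})^{+}}{t_k-t_{k-1}},1\Big\},
\]
whose ramps $\phi_k$ are all nondecreasing functions of $f$, hence pairwise comonotone. Comonotone additivity, positive homogeneity and the translation law then give $I(f)=t_0+\sum_k(t_k-t_{k-1})I(\phi_k)$. Because $\phi_k=0$ off $\{f>t_{k-1}\}$ while $\phi_k=1$ on $\{f\ge t_k\}\supseteq\{f>t_k\}$, the definition of $\mu$ furnishes the two-sided bound $\mu(\{f>t_k\})\le I(\phi_k)\le\mu(\{f>t_{k-1}\})$. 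Thus $I(f)-t_0$ is squeezed between the lower and upper Riemann sums of the nonincreasing (hence Riemann-integrable) function $t\mapsto\mu(\{f>t\})$; refining the partition gives $I(f)=t_0+\int_{t_0}^{\infty}\mu(\{f>t\})\,\mathrm{d}t$. A short bookkeeping with the translation law, splitting the $t$-axis at $0$ and using $\mu(\{f>t\})=1$ for $t<m$, rewrites this as the asserted formula with the $-1$ correction on $(-\infty,0)$. The delicate points are the matching of the ramp bounds with the two sides of the Riemann sum and the Riemann integrability supplied by monotonicity of the distribution function.

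Finally, uniqueness and the submodular claim. Applying the representation to the ramp $\psi_\epsilon=\min\{(f-s)^{+}/\epsilon,1\}$ gives $I(\psi_\epsilon)=\int_0^1\mu(\{f>s+\epsilon t\})\,\mathrm{d}t$, and as $\epsilon\downarrow 0$ lower continuity of $\mu$ forces $I(\psi_\epsilon)\to\mu(\{f>s\})$; hence every representing capacity is recovered from $I$ on the ramps, which proves both $\lim_{\epsilon\downarrow 0}I(\psi_\epsilon)=\mu(U)$ and uniqueness. For the submodular part I would write $U=\{p>0\}$ and $V=\{q>0\}$ with $p,q\ge 0$ in $C(X)$ and put $u_\epsilon=\min\{p/\epsilon,1\}$, $v_\epsilon=\min\{q/\epsilon,1\}$; then $\sup\{u_\epsilon,v_\epsilon\}$ and $\inf\{u_\epsilon,v_\epsilon\}$ are precisely the ramps attached to $\max\{p,q\}$ and $\min\{p,q\}$, that is, to $U\cup V$ and $U\cap V$. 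Submodularity of $I$ reads $I(\sup\{u_\epsilon,v_\epsilon\})+I(\inf\{u_\epsilon,v_\epsilon\})\le I(u_\epsilon)+I(v_\epsilon)$, and passing to the limit through the recovery formula yields $\mu(U\cup V)+\mu(U\cap V)\le\mu(U)+\mu(V)$.
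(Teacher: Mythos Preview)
Your argument is correct, and for the lower-continuity part it coincides with the paper's proof: both invoke Dini's theorem on the compact space $X$ to upgrade pointwise monotone convergence to uniform convergence, then use continuity of $I$ with respect to the sup norm.

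Where you diverge is in the integral representation and the submodular transfer. The paper does not prove these at all; it simply appeals to the literature (Cerreia-Vioglio, Maccheroni, Marinacci and Montrucchio, \emph{J.\ Math.\ Anal.\ Appl.}\ \textbf{385} (2012), Proposition~17 and Theorem~13(c)). You instead give a direct construction: you define $\mu(U)$ by the subordination formula $\mu(U)=\sup\{I(h):0\le h\le 1,\ h|_{X\setminus U}=0\}$, and then establish the Choquet formula via the exact comonotone ``ramp'' decomposition $f=t_0+\sum_k(t_k-t_{k-1})\phi_k$, squeezing $I(f)-t_0$ between the upper and lower Riemann sums of the nonincreasing distribution function $t\mapsto\mu(\{f>t\})$. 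Your uniqueness argument (recovering $\mu$ on level sets as $\lim_{\varepsilon\downarrow 0}I(\psi_\varepsilon)$) and your passage from submodularity of $I$ to submodularity of $\mu$ through the lattice identities $\sup\{u_\varepsilon,v_\varepsilon\}=\min\{\max\{p,q\}/\varepsilon,1\}$ and $\inf\{u_\varepsilon,v_\varepsilon\}=\min\{\min\{p,q\}/\varepsilon,1\}$ are clean and self-contained. The trade-off is clear: the paper's route is short but defers the substance to an external reference, while your route is longer but elementary and keeps the argument within the paper's own framework; in particular it makes transparent exactly where comonotone additivity, monotonicity, and compactness each enter.
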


\begin{proof}
Let $(f_{n})_{n}$ and $f$ in $C(X)$, with $(f_{n})$ nondecreasing and
$\lim_{n\rightarrow\infty}f_{n}(x)=f(x)$, for all $x\in X$. Since $I$ is
monotone, it is immediate that
\[
\lim_{n\rightarrow\infty}I(f_{n})\leq I(f).
\]
On the other hand, choose any arbitrary $\varepsilon>0$ and take
$g=f-\varepsilon1$, that is $f=g+\varepsilon1$. Then, $\lim_{n\rightarrow
\infty}f_{n}(x)=f(x)>g(x)$, for all $x\in X$. Since $X$ is compact and
$(f_{n})$ is a nondecreasing sequence of continuous functions, by Dini's
theorem, there is an integer $N$, such that $f_{n}(x)>g(x)=f(x)-\varepsilon1$,
for all $x\in X$ and $n\geq N$. Taking into account the comonotonic additivity
and monotonicity of $I$, we infer that
\[
I(f_{n})\geq I(f-\varepsilon1)=I(f)-\varepsilon I(1)
\]
for all $n\geq N.$ Passing to the limit, first as $n\rightarrow\infty$ and
next as $\varepsilon\rightarrow0$, we obtain $\lim_{n\rightarrow\infty}%
I(f_{n})\geq I(f).$ Since the other inequality was already noticed, we
conclude that $\lim_{n\rightarrow\infty}I(f_{n})=I(f).$

The integral representation of $I$ is part of a more general result due to
Cerreia-Vioglio et al. See \cite{CMMM2012}, Proposition 17, p. 907. As
concerns the correspondence between the property of submodularity of $I$ and
$\mu$, this follows by adapting the argument in \cite{CMMM2012}, Theorem 13
(c), p. 901.
\end{proof}

A result similar to Theorem \ref{thm2}, but for the comonotonically additive,
monotone and upper continuous functionals, was shown by Zhou \cite{Zhou}.

\begin{remark}
\label{rem3}\emph{(Vector capacities)} The aforementioned theory of
integration with respect to a capacity can be easily extended by considering
vector capacities. A simple example is offered by the set functions
$\boldsymbol{\mu}$ defined on the lattice $\Sigma_{up}^{+}(X)$ $($associated
to a compact Hausdorff space $X)$ and taking values in the positive cone of
$\mathbb{R}^{N}$ in such a way that%
\[
\boldsymbol{\mu}\left(  \emptyset\right)  =0\text{ and }\boldsymbol{\mu
}(A)\leq\boldsymbol{\mu}(B)\text{ if }A\subset B.
\]
The concepts of upper/lower continuity and submodularity extend verbatim to
the case of vector capacities. Moreover, a vector capacity $\boldsymbol{\mu}$
is upper continuous $($lower continuous, submodular etc.$)$ if and only if all
its components $\mu_{k}=\operatorname*{pr}_{k}\circ\boldsymbol{\mu}$ are
scalar capacities in the sense of Definition \ref{def2}, with the respective
property. Therefore, the integral with respect to a submodular vector capacity
$\boldsymbol{\mu},$
\[
(C)\int_{X}f\mathrm{d}\boldsymbol{\mu}=\left(  (C)\int_{X}f\mathrm{d}\mu
_{1},...,(C)\int_{X}f\mathrm{d}\mu_{N}\right)  ,
\]
defines a Choquet type operator from $C(X)$ to $\mathbb{R}^{N}.$

According to Theorem \emph{\ref{thm2}}, this construction generates all
Choquet type operators from $C(X)$ to $\mathbb{R}^{N}.$ More general results
concerning the theory of Choquet type operators taking values in an arbitrary
ordered Banach space are available in \emph{\cite{GN2021}}.
\end{remark}

\section{The extension of H\"{o}lder's inequality}

The extension of H\"{o}lder's inequality to the framework of Choquet integral
was treated by numerous authors, see for example \cite{A2019}, \cite{Cerda},
\cite{MP}. By adapting the standard argument based on Young's inequality (see,
\cite{NP2018}, section 1.2, pp. 11-13), H\"{o}lder's inequality for the range
of parameters $p\in(1,\infty)$ and $1/p+1/q=1$ can be further extended to the
general framework of sublinear and monotone operators. Recall that Young's
inequality for this choice of parameters asserts that for all nonnegative
numbers $u,v$ we have%
\begin{equation}
uv\leq\frac{u^{p}}{p}+\frac{v^{q}}{q}\text{\quad for all }u,v\geq0
\label{eqy1}%
\end{equation}
and the equality occurs if and only if $u^{p}=v^{q}.$

\begin{theorem}
\label{thm3}$($\emph{H\"{o}lder's inequality for} $p\in(1,\infty)$ and
$1/p+1/q=1)$ Suppose that $X$ and $Y$ \textit{are two Hausdorff topological
spaces and }$E$ and $F$ are respectively vector sublattices of $C_{b}%
(X)$\textit{ and }$C_{b}(Y)$ which contain the unit \emph{(}the function
identically $1$\emph{)}. Then every \textit{sublinear and monotone operator}
$T:E\rightarrow F$\textit{ for which }$T(1)=1$\textit{ verifies the
inequality}%
\begin{equation}
T(|fg|)\leq\lbrack T(|f|^{p})]^{1/p}\cdot\lbrack T(|g|^{q})]^{1/q}
\label{eqh1}%
\end{equation}
for all $f,g\in E$ such that $fg\in E.$
\end{theorem}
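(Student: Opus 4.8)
The plan is to mimic the classical proof of Hölder's inequality, which derives it from Young's inequality by a normalization argument, but carried out entirely at the level of the operator $T$ so that only sublinearity, monotonicity, and $T(1)=1$ are used.

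First I would dispose of the degenerate cases. If either $T(|f|^p)=0$ or $T(|g|^q)=0$ at some point of $Y$, the right-hand side of \eqref{eqh1} vanishes there, so I must check the inequality still holds; I expect to handle this by a limiting argument or by observing that monotonicity forces $T(|fg|)$ to vanish there as well. Assuming both factors are strictly positive, the core idea is to apply Young's inequality \eqref{eqy1} pointwise. For fixed $f,g$ one writes, for every $s\in X$,
\[
|f(s)g(s)|\leq\frac{|f(s)|^{p}}{p}+\frac{|g(s)|^{q}}{q},
\]
but the real work is to insert the normalizing constants before applying $T$. The classical trick is to replace $f$ by $f/\|f\|_p$ and $g$ by $g/\|g\|_q$; here the analogue is to divide by the \emph{scalars} $[T(|f|^p)]^{1/p}$ and $[T(|g|^q)]^{1/q}$. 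The subtlety is that these are elements of $F$, i.e. functions on $Y$, not scalars, so they cannot be pulled through $T$ directly.

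To get around this, the key step is to fix an arbitrary point $y_0\in Y$, set $\lambda=[T(|f|^p)](y_0)$ and $\mu=[T(|g|^q)](y_0)$, treat these as positive constants, and apply Young's inequality with $u=|f(s)|/\lambda^{1/p}$ and $v=|g(s)|/\mu^{1/q}$. This yields the pointwise-in-$X$ inequality
\[
\frac{|f(s)g(s)|}{\lambda^{1/p}\mu^{1/q}}\leq\frac{1}{p}\cdot\frac{|f(s)|^{p}}{\lambda}+\frac{1}{q}\cdot\frac{|g(s)|^{q}}{\mu},
\]
valid for all $s\in X$. Now I would apply $T$ to both sides. Monotonicity (Ch3) preserves the inequality, and sublinearity (Ch1) lets me split the right-hand side and pull out the nonnegative scalars $1/(p\lambda)$ and $1/(q\mu)$ via positive homogeneity, while on the left $1/(\lambda^{1/p}\mu^{1/q})$ also comes out as a positive scalar. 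The result is an inequality between functions in $F$; evaluating it at the specific point $y_0$ and using $T(|f|^p)(y_0)=\lambda$, $T(|g|^q)(y_0)=\mu$ collapses the right-hand side to $1/p+1/q=1$, giving
\[
\frac{[T(|fg|)](y_0)}{\lambda^{1/p}\mu^{1/q}}\leq\frac{1}{p}+\frac{1}{q}=1,
\]
which is exactly \eqref{eqh1} at $y_0$. Since $y_0\in Y$ was arbitrary, the functional inequality follows.

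The main obstacle is the mismatch between scalar and operator-valued normalization: Young's inequality is a scalar statement, whereas $T(|f|^p)$ is a function, so I cannot simply divide inside $T$. The device of freezing a point $y_0$ and absorbing the resulting \emph{constants} $\lambda,\mu$ through positive homogeneity is what resolves this, and the whole argument hinges on $T(1)=1$ ensuring the normalized right-hand side evaluates to $1$ rather than some capacity-dependent constant. I would also take care to justify that $|fg|\in E$ (assumed in the hypothesis) and that $|f|^p,|g|^q$ lie in $E$, or else restrict to the functions for which the stated quantities make sense, since a vector sublattice need not be closed under taking powers.
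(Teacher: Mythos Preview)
Your proposal is correct and follows essentially the same route as the paper: the paper packages your ``freeze a point $y_0$ and apply $T$'' device into the functional $A_y(f)=T(f)(y)$, applies Young's inequality with the same normalization $u=|f|/A_y(|f|^p)^{1/p}$, $v=|g|/A_y(|g|^q)^{1/q}$, and handles the degenerate case by the $\varepsilon$-perturbation limit you anticipated. Your closing remark that $|f|^p,|g|^q$ need not lie in the sublattice $E$ is a fair observation the paper leaves implicit.
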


\begin{proof}
For $y\in Y$ arbitrarily fixed, consider the sublinear and monotone functional
$A_{y}:E\rightarrow\mathbb{R}$ defined by the formula
\[
A_{y}(f)=\left(  T(f)\right)  (y).
\]
Clearly, $A_{y}(1)=1$.

Assuming $A_{y}(|f|^{p})>0$ and $A_{y}(|g|^{q})>0,$ we apply inequality
(\ref{eqy1}) for $u=|f|/A_{y}(|f|^{p})^{1/p}$ and $v=|g|/A_{y}(|g|^{q}%
)^{{1/q}}$ to infer that%
\begin{equation}
\frac{|f|}{A_{y}(|f|^{p})^{1/p}}\frac{|g|}{A_{y}(|g|^{q})^{1/q}}\leq\frac
{1}{p}\cdot\frac{|f|^{p}}{A_{y}(|f|^{p})}+\frac{1}{q}\cdot\frac{|g|^{q}}%
{A_{y}(|g|^{q})}. \label{eqy2}%
\end{equation}
Since the functional $A_{y}$ is monotone and sublinear, the last inequality
implies
\[
\frac{A_{y}(|fg|)}{A_{y}(|f|^{p})^{1/p}\cdot A_{y}(|g|^{q})^{1/q}}\leq\frac
{1}{p}+\frac{1}{q}=1,
\]
that is, $T(|f\cdot g|)(y)\leq\lbrack T(|f|^{p})(y)]^{1/p}\cdot\lbrack
T(|g|^{q})(y)]^{1/q}$, which is inequality (\ref{eqh1}) in the statement.

If $A_{y}(|f|^{p})=0$ and/or $A_{y}(|g|^{q})=0,$ then one repeats the above
reasoning by replacing in (\ref{eqy2}) the vanishing number(s) by an
$\varepsilon>0$ arbitrarily small and then passing to the limit as
$\varepsilon\rightarrow0$ to conclude that $A_{y}(|f|\cdot|g|)=0.$ The proof
is done.
\end{proof}

\begin{remark}
\label{rem4}$($\emph{Conditions for equality in Theorem}~\emph{\ref{thm3}}$)$
We assume that $X$ is a compact Hausdorff space and $T:C(X)\rightarrow C(X)$
is a Choquet type operator such that $T(1)=1$ and
\[
T(\sup\left\{  {f,g}\right\}  )+T(\inf\left\{  {f,g}\right\}  )\leq
T(f)+T(g)\text{ for all }f,g\in C(X);
\]
the last condition is nothing but the property of submodularity.

For $x\in X$ arbitrarily fixed, let us consider the comonotone additive and
monotone functional
\[
A_{x}:C(X)\rightarrow\mathbb{R},\text{\quad}A_{x}(f)=\left(  T(f)\right)
(x).
\]
Clearly, $A_{x}(1)=1$ and $A_{x}$ is a submodular functional. According to
Theorem \emph{\ref{thm2}} there exists a unique normalized, lower-continuous
and submodular capacity $\mu_{x}$ on $\Sigma_{up}^{-}(X)$, such that
$A_{x}(f)=(C)\int_{X}fd\mu_{x}$. In this case,%
\[
(C)\int_{X}|h|\mathrm{d}\mu_{x}=0\text{ is equivalent to }\mu_{x}\left(
\{t\in X:|h(t)|>0\}\right)  =0
\]
whenever $h\in C(X)$. See \emph{\cite{WK}}, Theorem \emph{11.3}, p. \emph{228}.

We have equality in \emph{(\ref{eqh1})} at the point $x$ every time when
$A_{x}(|f|^{p})=0$ and/or $A_{x}(|g|^{q})=0,$ equivalently,
\[
\mu_{x}\left(  \{t\in X:|f(t)|>0\}\right)  =0\text{\quad and/or\quad}\mu
_{x}\left(  \{t\in X:|g(t)|>0\}\right)  =0.
\]
According to \emph{(\ref{eqnullsets}), }this means that equality occurs when%
\[
|f(t)|=0\text{ except for a }\mu_{x}\text{-null set\quad and/or\quad
}|g(t)|=0\text{ except for a }\mu_{x}\text{-null set.}%
\]

Suppose now that $A_{x}(|f|^{p})>0$ and $A_{x}(|g|^{q})>0$. In this case an
inspection of the proof of Theorem \emph{\ref{thm3}} shows that equality
occurs in \emph{(\ref{eqh1})} at the point $x$ if
\[
\mu_{x}\left\{  t\in X:\frac{1}{p}\cdot\frac{|f(t)|^{p}}{A_{x}(|f|^{p})}%
+\frac{1}{q}\cdot\frac{|g(t)|^{q}}{A_{x}(|g|^{q})}>\frac{|f(t)|}{A_{x}%
(|f|^{p})^{1/p}}\frac{|g(t)|}{A_{x}(|g|^{q})^{1/q}}\right\}  =0,
\]
equivalently,
\begin{equation}
\frac{1}{p}\cdot\frac{|f(t)|^{p}}{A_{x}(|f|^{p})}+\frac{1}{q}\cdot
\frac{|g(t)|^{q}}{A_{x}(|g|^{q})}=\frac{|f(t)|}{A_{x}(|f|^{p})^{1/p}}%
\frac{|g(t)|}{A_{x}(|g|^{q})^{1/q}}, \label{Y1}%
\end{equation}
except possibly a $\mu_{x}$-null set. According to the equality case in
Young's inequality, this implies the existence of two positive constants
$\alpha$ and $\beta$ such that
\begin{equation}
\alpha|f(t)|^{p}=\beta|g(t)|^{q} \label{Y2}%
\end{equation}
except possibly a $\mu_{x}$-null set.
\end{remark}

If an operator $T:E\rightarrow F$ is monotone and subadditive, then it
verifies the inequality%
\begin{equation}
\left\vert T(f)-T(g)\right\vert \leq T\left(  \left\vert f-g\right\vert
\right)  \text{\quad for all }f,g. \label{eqmod}%
\end{equation}
Indeed, $f\leq g+\left\vert f-g\right\vert ~$yields $T(f)\leq T(g)+T\left(
\left\vert f-g\right\vert \right)  ,$ that is, $T(f)-T(g)\leq T\left(
\left\vert f-g\right\vert \right)  $, and interchanging the role of $f$ and
$g$ we infer that $-\left(  T(f)-T(g)\right)  \leq T\left(  \left\vert
f-g\right\vert \right)  .$

If in addition $T(0)=0$ (for example, this happens when $T$ is monotone and
sublinear), then (\ref{eqmod}) yields the following inequality that
complements (\ref{eqh1}):
\begin{equation}
\left\vert T(f)\right\vert \leq T\left(  \left\vert f\right\vert \right)
\text{ for all }f\in E. \label{rh1}%
\end{equation}
This leads us to \emph{Holder's inequality for} $p=1$ \emph{and} $q=\infty:$%
\begin{equation}
\left\vert T(fg)\right\vert \leq T(\left\vert fg\right\vert )\leq T\left(
\left\vert f\right\vert \right)  \sup_{x\in X}|g(x)| \label{rh2}%
\end{equation}
for all $f,g\in E$ such that $fg\in E.$

If $X$ is a locally compact Hausdorff space and $T:C_{b}\left(  X\right)
\rightarrow\mathbb{R}$ is a positive linear functional for which $T(1)=1$,
then $T$ admits the integral representation $T(f)=\int_{X}fd\mu$ for a
suitable Borel probability measure $\mu$ and the difference%
\[
T(f^{2})-T(f)^{2}=\int_{X}f^{2}\mathrm{d}\mu-\left(  \int_{X}f\mathrm{d}%
\mu\right)  ^{2}%
\]
is just the \emph{variance} of $f$. The fact that the variance is nonnegative
follows from the Cauchy-Bunyakovsky-Schwarz inequality (the particular case of
H\"{o}lder's inequality for $p=q=2).$ Thus, in the general context of
sublinear and monotone operators $T:C_{b}(X)\rightarrow C_{b}(X)$, the
quantity%
\[
D_{T}^{2}(f)=T(1)\cdot T(f^{2})-T(f)^{2}%
\]
can be interpreted as the $T$-\emph{variance} of $f.$ The $T$-\emph{covariance
}of a pair of functions $f$ and $g$ in $C_{b}(X)$ can be introduced via the
formula
\[
\operatorname*{Cov}\nolimits_{T}(f,g)=T(1)\cdot T(fg)-T(f)T(g).
\]

\begin{problem}
\label{prob1}Under what conditions on $T$ is the following nonlinear version
of the Cauchy-Bunyakovsky-Schwarz inequality,%
\[
\left\vert \operatorname*{Cov}\nolimits_{T}(f,g)\right\vert \leq\sqrt
{D_{T}^{2}(f)}\sqrt{D_{T}^{2}(g)},
\]
true?
\end{problem}

Some results related to this problem are presented in what follows.

\begin{lemma}
\label{Lem1}If $T$ is a monotone and sublinear operator that maps $C_{b}(X)$
into itself, then
\[
D_{T}^{2}(-\left\vert f\right\vert ))=T(1)\cdot T(\left\vert f\right\vert
^{2})-|T(-\left\vert f\right\vert )|^{2}\geq0,
\]
for all $f\in C_{b}(X)$.
\end{lemma}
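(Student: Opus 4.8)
The plan is to prove the inequality \emph{pointwise} on $X$, reducing the operator statement to a scalar Cauchy--Bunyakovsky--Schwarz estimate. By definition $D_T^2(-|f|)$ is the function $T(1)\cdot T(|f|^2)-|T(-|f|)|^2$ (note that $(-|f|)^2=|f|^2$ and $T(-|f|)^2=|T(-|f|)|^2$), so proving $D_T^2(-|f|)\ge 0$ amounts to showing that for every $y\in X$ one has $|T(-|f|)(y)|^2\le T(1)(y)\cdot T(|f|^2)(y)$. To this end I would fix $y$ and pass to the functional $A_y\colon C_b(X)\to\mathbb{R}$, $A_y(h)=\left(T(h)\right)(y)$, which inherits from $T$ monotonicity and sublinearity; in particular $A_y(0)=0$, $A_y(h)\ge 0$ whenever $h\ge 0$, and $A_y(1)\ge 0$.

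First I would remove the sign. Since $T$ is monotone, subadditive and satisfies $T(0)=0$, the modulus inequality (\ref{rh1}) gives $|T(-|f|)|\le T(\left\vert -|f|\right\vert )=T(|f|)$ in $C_b(X)$; evaluating at $y$ and squaring yields $|A_y(-|f|)|^2\le (A_y(|f|))^2$. Hence it suffices to establish the Cauchy--Bunyakovsky--Schwarz bound
\[
(A_y(|f|))^2\le A_y(1)\cdot A_y(|f|^2).
\]

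Rather than invoke Theorem \ref{thm3} (whose normalization $T(1)=1$ is not assumed here), I would derive this from a one-parameter form of Young's inequality. Writing $g=|f|\ge 0$, an application of (\ref{eqy1}) with $p=q=2$, $u=\sqrt{\lambda}\,g$ and $v=1/\sqrt{\lambda}$ gives the pointwise inequality $g\le \tfrac{\lambda}{2}g^2+\tfrac{1}{2\lambda}\cdot 1$ for every $\lambda>0$. Applying the monotone, subadditive and positively homogeneous functional $A_y$ yields $A_y(g)\le \tfrac{\lambda}{2}A_y(g^2)+\tfrac{1}{2\lambda}A_y(1)$ for all $\lambda>0$. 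When $A_y(g^2)>0$ and $A_y(1)>0$, the choice $\lambda=\sqrt{A_y(1)/A_y(g^2)}$ minimizes the right-hand side and gives $A_y(g)\le\sqrt{A_y(1)\,A_y(g^2)}$, which is exactly the required estimate after squaring, since $g^2=|f|^2$. Combining with the first step completes the proof.

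The only delicate points are the degenerate cases, and I expect these to be the main (mild) obstacle rather than the inequality itself. When $A_y(g^2)=0$ (in particular whenever $A_y(1)=0$), letting $\lambda\to\infty$ forces $A_y(g)\le 0$; together with $A_y(g)\ge 0$ this gives $A_y(g)=0$, so both sides vanish and the bound holds trivially. The conceptual point worth stressing is that, because $T$ is genuinely nonlinear, the classical ``variance is a nonnegative second moment'' argument is unavailable; the substitute is to work at each fixed $y$ and let the pointwise Young inequality interact with sublinearity and monotonicity, the optimization over $\lambda$ playing the role of the Cauchy--Schwarz discriminant.
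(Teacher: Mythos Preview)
Your proof is correct, and at heart it is the same ``nonnegative quadratic in a free parameter'' argument the paper uses, carried out pointwise at a fixed $y\in X$. The only difference is in the packaging. The paper applies $T$ directly to the inequality $0\le(\lambda-|f|)^{2}=\lambda^{2}\cdot 1 - 2\lambda|f| + |f|^{2}$; subadditivity and positive homogeneity then yield $0\le \lambda^{2}T(1)+2\lambda T(-|f|)+T(|f|^{2})$ for all $\lambda>0$, and a discriminant (contradiction) argument gives $|T(-|f|)|^{2}\le T(1)\,T(|f|^{2})$ in one stroke. You instead first invoke the modulus inequality $|T(-|f|)|\le T(|f|)$ and then prove the formally stronger bound $T(|f|)^{2}\le T(1)\,T(|f|^{2})$ via the Young/AM--GM inequality $|f|\le\tfrac{\lambda}{2}|f|^{2}+\tfrac{1}{2\lambda}$ and an optimization over $\lambda$. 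Your detour buys a little extra information (a bound on $T(|f|)$ rather than only on $|T(-|f|)|$), and your treatment of the degenerate cases $A_{y}(1)=0$ or $A_{y}(|f|^{2})=0$ is cleaner than the paper's, which tacitly assumes the leading coefficient $T(1)(x_{0})$ is positive when speaking of two distinct roots.
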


\begin{proof}
Since $T$ is monotone and subadditive, the fact that $0\leq(\lambda
-|f(x)|)^{2}$ for all $\lambda>0$ and $x\in X$ yields
\begin{equation}
0\leq T[(\lambda-|f|)^{2}](x)\leq\lambda^{2}T(1)(x)+2\lambda
T(-|f|)(x)+T(\left\vert f\right\vert ^{2})(x). \label{qpos}%
\end{equation}
Suppose by reductio ad absurdum that there exists $x_{0}\in X$ such that
\begin{equation}
|T(-|f|)(x_{0})|>\sqrt{T(1)(x_{0})\cdot T(f^{2})(x_{0})}. \label{detpos}%
\end{equation}
Then the second degree polynomial in $\lambda$,
\[
\lambda^{2}T(1)(x_{0})+2\lambda T(-|f|)(x_{0})+T(\left\vert f\right\vert
^{2})(x_{0})=0,
\]
will have two positive distinct solutions $\lambda_{1}<\lambda_{2}$. As a
consequence, for any $\lambda\in(\lambda_{1},\lambda_{2}),$
\[
\lambda^{2}T(1)(x_{0})+2\lambda T(-|f|\cdot|g|)(x_{0})+T(f^{2}g^{2}%
)(x_{0})<0,
\]
which contradicts condition (\ref{qpos}). Therefore (\ref{detpos}) does not
hold and the proof of Lemma 1 is done.
\end{proof}

The next lemma provides a partial answer to Problem \ref{prob1}.

\begin{lemma}
\label{Lem2}Suppose that $T:C_{b}(X)\rightarrow C_{b}(X)$ is a Choquet type
operator. Then for all pairs of functions $f,g\in C_{b}(X)$ such that
$\left\vert f\right\vert $ and $\left\vert g\right\vert $ are comonotone we
have the inequality%
\[
\left\vert \operatorname*{Cov}\nolimits_{T}(-\left\vert f\right\vert
,-\left\vert g\right\vert )\right\vert \leq\sqrt{D_{T}^{2}(-\left\vert
f\right\vert )}\sqrt{D_{T}^{2}(-\left\vert g\right\vert )}.
\]

\end{lemma}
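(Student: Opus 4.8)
The plan is to reduce the operator inequality to a family of scalar inequalities and then to run a quadratic/discriminant argument in the spirit of Lemma~\ref{Lem1}. Fix a point $x\in X$ and introduce the functional $A_{x}:C_{b}(X)\rightarrow\mathbb{R}$, $A_{x}(h)=(T(h))(x)$. Since $T$ is a Choquet type operator, each $A_{x}$ is sublinear, monotone and comonotone additive, and the asserted inequality between elements of $C_{b}(Y)$ holds exactly when it holds at every $x$ after replacing $T$ by $A_{x}$. Writing
\[
D_{x}^{2}(-|f|)=A_{x}(1)A_{x}(|f|^{2})-A_{x}(-|f|)^{2},\qquad \mathrm{Cov}_{x}=A_{x}(1)A_{x}(|f||g|)-A_{x}(-|f|)A_{x}(-|g|),
\]
it thus suffices to establish $|\mathrm{Cov}_{x}|\leq\sqrt{D_{x}^{2}(-|f|)}\,\sqrt{D_{x}^{2}(-|g|)}$ for each fixed $x$.

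The heart of the argument is that comonotonicity linearises the relevant quadratic. Because $|f|$ and $|g|$ are nonnegative and comonotone, the functions $|f|^{2}$, $|f||g|$, $|g|^{2}$ are pairwise comonotone, and so are the nonnegative multiples $s^{2}|f|^{2}$, $2st|f||g|$, $t^{2}|g|^{2}$ for any $s,t\geq0$. Hence positive homogeneity together with comonotone additivity gives
\[
A_{x}\left((s|f|+t|g|)^{2}\right)=s^{2}A_{x}(|f|^{2})+2st\,A_{x}(|f||g|)+t^{2}A_{x}(|g|^{2}),
\]
while comonotonicity of $-|f|$ and $-|g|$ yields $A_{x}(-(s|f|+t|g|))=s\,A_{x}(-|f|)+t\,A_{x}(-|g|)$. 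Applying Lemma~\ref{Lem1} to the nonnegative function $h=s|f|+t|g|$ (so that $|h|=h$) gives $A_{x}(1)A_{x}(h^{2})\geq A_{x}(-h)^{2}$; substituting the two identities and rearranging produces the quadratic-form inequality
\[
s^{2}D_{x}^{2}(-|f|)+2st\,\mathrm{Cov}_{x}+t^{2}D_{x}^{2}(-|g|)\geq0\qquad\text{for all }s,t\geq0.
\]

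It remains to extract the Cauchy--Bunyakovsky--Schwarz bound from this form. By Lemma~\ref{Lem1} the diagonal coefficients $D_{x}^{2}(-|f|)$ and $D_{x}^{2}(-|g|)$ are nonnegative, so the choice $s=\sqrt{D_{x}^{2}(-|g|)}$, $t=\sqrt{D_{x}^{2}(-|f|)}$ at once gives $\mathrm{Cov}_{x}\geq-\sqrt{D_{x}^{2}(-|f|)}\sqrt{D_{x}^{2}(-|g|)}$; equivalently, the discriminant argument of Lemma~\ref{Lem1} closes whenever the vertex $-\mathrm{Cov}_{x}/D_{x}^{2}(-|f|)$ of the associated parabola falls in the admissible range $[0,\infty)$, that is, whenever $\mathrm{Cov}_{x}\leq0$. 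I expect the reverse inequality $\mathrm{Cov}_{x}\leq\sqrt{D_{x}^{2}(-|f|)}\sqrt{D_{x}^{2}(-|g|)}$ to be the main obstacle: reaching it by the same route would require linearising $A_{x}((s|f|-t|g|)^{2})$, but with opposite signs the three pieces $s^{2}|f|^{2}$, $-2st|f||g|$, $t^{2}|g|^{2}$ are no longer pairwise comonotone, so comonotone additivity is unavailable and mere subadditivity points the wrong way. Closing this gap is the delicate step, and I would expect it to require exploiting finer structure of $A_{x}$ beyond $(\mathrm{Ch}1)$--$(\mathrm{Ch}3)$, for instance the integral representation $A_{x}(h)=(C)\int_{X}h\,\mathrm{d}\mu_{x}$ supplied by Theorem~\ref{thm2} in the compact case, used in tandem with the comonotonicity of $|f|$ and $|g|$.
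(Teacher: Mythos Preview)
Your approach coincides with the paper's: apply Lemma~\ref{Lem1} to the nonnegative function $|f|+\lambda|g|$, use comonotone additivity to split $T(-|f|-\lambda|g|)$ and subadditivity (or comonotone additivity) to split $T\bigl((|f|+\lambda|g|)^{2}\bigr)$, and arrive at
\[
\lambda^{2}D_{T}^{2}(-|g|)+2\lambda\operatorname{Cov}\nolimits_{T}(-|f|,-|g|)+D_{T}^{2}(-|f|)\geq0\qquad(\lambda>0).
\]
The paper then asserts the two-sided bound ``repeating the argument used in the proof of Lemma~1''. You correctly put your finger on the problem with this: the discriminant argument of Lemma~\ref{Lem1} works only because there the middle coefficient $2T(-|f|)$ is nonpositive, forcing the roots of the quadratic into $[0,\infty)$. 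Here the middle coefficient $2\operatorname{Cov}_{T}(-|f|,-|g|)$ may well be positive, in which case both roots lie in $(-\infty,0)$ and the restriction $\lambda>0$ gives nothing beyond what is already known. So the argument, both yours and the paper's, establishes only
\[
\operatorname{Cov}\nolimits_{T}(-|f|,-|g|)\geq-\sqrt{D_{T}^{2}(-|f|)}\,\sqrt{D_{T}^{2}(-|g|)}.
\]

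You should not look for extra structure to close the gap: the upper bound is false under the stated hypotheses. Take $X=\{1,2\}$ and let $T(h)$ be the constant function with value $\max\{h(1),h(2)\}$; this is a Choquet type operator with $T(1)=1$. For $|f|=(1,2)$ and $|g|=(1,3)$ (which are comonotone) one has $T(|f|^{2})=4$, $T(|g|^{2})=9$, $T(|fg|)=6$, $T(-|f|)=T(-|g|)=-1$, so that
\[
D_{T}^{2}(-|f|)=3,\qquad D_{T}^{2}(-|g|)=8,\qquad \operatorname{Cov}\nolimits_{T}(-|f|,-|g|)=6-1=5>\sqrt{24}.
\]
Thus the absolute value in the lemma cannot stand; only the one-sided estimate that your quadratic actually delivers is valid.
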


\begin{proof}
Let $\lambda>0$ arbitrarily fixed. According to Lemma 1,%
\[
|T(-\left\vert f\right\vert -\lambda\left\vert g\right\vert )|^{2}\leq
T(1)\cdot T(\left\vert f\right\vert ^{2}+2\lambda\left\vert fg\right\vert
+\lambda^{2}\left\vert g\right\vert ^{2})
\]
while the fact that $T$ is comonotonic additive yields
\[
|T(-\left\vert f\right\vert -\lambda\left\vert g\right\vert )|^{2}=\left(
T(-\left\vert f\right\vert )+\lambda T(-\left\vert g\right\vert )\right)
^{2}.
\]
Therefore%
\[
\lambda^{2}D^{2}(-\left\vert g\right\vert )+2\lambda\left(  T(1)\cdot
T(\left\vert fg\right\vert )-T(-\left\vert f\right\vert )T(-\left\vert
g\right\vert )\right)  +D^{2}(-\left\vert f\right\vert )\geq0
\]
and taking into account that $\lambda>0$ was arbitrarily fixed one can
conclude (repeating the argument used in the proof of Lemma 1) that%
\[
\left\vert T(1)\cdot T(\left\vert fg\right\vert )-T(-\left\vert f\right\vert
)T(-\left\vert g\right\vert )\right\vert ^{2}\leq D_{T}^{2}(\left\vert
f\right\vert )D_{T}^{2}(\left\vert g\right\vert ).
\]

\end{proof}

\section{An application to Korovkin theory}

The following examples of Choquet type operators, borrowed from \cite{Gal-Mjm}%
, illustrate both our nonlinear extension of Korovkin's theorem\emph{ }stated
in\emph{ }Theorem \ref{thm1} and the nonlinear Cauchy-Bunyakovsky-Schwarz
inequalities stated in Lemma \ref{Lem1} and Lemma \ref{Lem2} :

- the \emph{Bernstein-Kantorovich-Choquet }operators $K_{n,\mu}%
:C([0,1])\rightarrow C([0,1]),$ defined by the formula
\[
K_{n,\mu}(f)(x)=\sum_{k=0}^{n}\frac{(C)\int_{k/(n+1)}^{(k+1)/(n+1)}%
f(t)\mathrm{d}\mu}{\mu([k/(n+1),(k+1)/(n+1)])}\cdot{\binom{n}{k}}%
x^{k}(1-x)^{n-k};
\]

- the \emph{Sz\'{a}sz-Mirakjan-Kantorovich-Choquet }operators\emph{ }%
$S_{n,\mu}:C([0,\infty))\rightarrow C([0,\infty)),$ defined by the formula
\emph{ }
\[
S_{n,\mu}(f)(x)=e^{-nx}\sum_{k=0}^{\infty}\frac{(C)\int_{k/n}^{(k+1)/n}%
f(t)\mathrm{d}\mu}{\mu([k/n,(k+1)/n])}\cdot\frac{(nx)^{k}}{k!};
\]

- the \emph{Baskakov-Kantorovich-Choquet }operators $V_{n,\mu}:C([0,\infty
))\rightarrow C([0,\infty))$ defined by the formula%
\[
V_{n,\mu}(f)(x)=\sum_{k=0}^{\infty}\frac{(C)\int_{k/n}^{(k+1)/n}%
f(t)\mathrm{d}\mu}{\mu([k/n,(k+1)/n])}\cdot{\binom{n+k-1}{k}}\frac{x^{k}%
}{(1+x)^{n+k}}.
\]

In the above examples $\mu$ is a submodular capacity whose restrictions to
suitable intervals are normalized by dividing the respective integrals by the
length of the interval of integration.

The aim of this section is to prove a quantitative estimate for the Korovkin
type result stated in Theorem \ref{thm1}. A basic ingredient is Lemma
\ref{Lem1}.

\begin{theorem}
\label{thm4} Let us consider the sequence of monotone, sublinear and
comonotone additive Bernstein-Kantorovich-Choquet operators $(K_{n,\nu})_{n}$
defined as above, but with $\nu$ a submodular normalized capacity satisfying
an inequality of the form $\nu\leq c\cdot\overline{\nu}$, with $c\geq1$. Then,
for all nonnegative functions $f\in C([0,1])$, all points $x\in\lbrack0,1]$
and all indices $n\in\mathbb{N}$, the following quantitative estimate holds:%
\begin{equation}
|K_{n,\nu}(f)(x)-f(x)|\leq(c+1)\omega_{1}(f;\sqrt{x^{2}+2xK_{n,\nu
}(-t)(x)+K_{n,\nu}(t^{2})(x)}), \label{eqthm4}%
\end{equation}
where $\omega_{1}(f;\delta)=\sup\{|f(t)-f(x)|:t,x\in\lbrack0,1],~|t-x|\leq
\delta)$ denotes the modulus of continuity.
\end{theorem}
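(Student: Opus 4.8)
The plan is to reduce the estimate to a bound on the first absolute moment $K_{n,\nu}(|t-x|)(x)$ and then to control that moment through the Cauchy--Bunyakovsky--Schwarz inequality of Lemma \ref{Lem1}. First I would record that $K_{n,\nu}$ is monotone, subadditive and positively homogeneous and satisfies $K_{n,\nu}(1)=1$ (the latter because, by the calibration property in Remark \ref{rem1}$(a)$, each normalized block $(C)\int_{I_k}1\,\mathrm{d}\nu/\nu(I_k)$ equals $1$ while the Bernstein weights sum to $1$). Since $f\geq0$ gives $f(x)\geq0$, positive homogeneity yields $K_{n,\nu}\big(f(x)\cdot1\big)(x)=f(x)$, so applying the modulus inequality (\ref{eqmod}) to $f$ and the constant function $f(x)\cdot1$ gives
\[
|K_{n,\nu}(f)(x)-f(x)|\leq K_{n,\nu}\big(|f(\cdot)-f(x)|\big)(x).
\]
Then I would insert the pointwise estimate $|f(t)-f(x)|\leq\big(1+|t-x|/\delta\big)\,\omega_1(f;\delta)$, valid for any $\delta>0$, and use monotonicity together with sublinearity to split the right-hand side into
\[
|K_{n,\nu}(f)(x)-f(x)|\leq\omega_1(f;\delta)\Big(1+\tfrac{1}{\delta}\,K_{n,\nu}(|t-x|)(x)\Big).
\]

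The crux is to show $K_{n,\nu}(|t-x|)(x)\leq c\,\sqrt{K_{n,\nu}((t-x)^2)(x)}$, and this is where the hypothesis $\nu\leq c\,\overline{\nu}$ enters. Lemma \ref{Lem1} only controls $K_{n,\nu}(-|h|)$, not $K_{n,\nu}(|h|)$, because the operator is nonlinear; the obstacle is precisely to pass from $|h|$ to $-|h|$. To bridge them I would argue block by block: writing $K_{n,\nu}(g)(x)=\sum_k J_k(g)\,p_{n,k}(x)$ with $J_k(g)=(C)\int_{I_k}g\,\mathrm{d}\nu/\nu(I_k)$ and $p_{n,k}(x)=\binom{n}{k}x^k(1-x)^{n-k}\geq0$, I would combine $\nu\leq c\,\overline{\nu}$ (monotonicity in the capacity, Remark \ref{rem1}$(d)$) with the duality formula $(C)\int(-g)\,\mathrm{d}\nu=-(C)\int g\,\mathrm{d}\overline{\nu}$ of Remark \ref{rem1}$(e)$ to obtain, for $g=|h|\geq0$, the block inequality $J_k(|h|)\leq c\,(-J_k(-|h|))$. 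Since every $J_k(-|h|)\leq0$, summing against the nonnegative weights $p_{n,k}(x)$ gives
\[
K_{n,\nu}(|h|)(x)\leq c\big(-K_{n,\nu}(-|h|)(x)\big)=c\,|K_{n,\nu}(-|h|)(x)|.
\]
Applying Lemma \ref{Lem1} (with $K_{n,\nu}(1)=1$) then gives $|K_{n,\nu}(-|h|)(x)|\leq\sqrt{K_{n,\nu}(|h|^2)(x)}$, so that $K_{n,\nu}(|h|)(x)\leq c\sqrt{K_{n,\nu}(|h|^2)(x)}$; taking $h(t)=t-x$ produces the desired moment bound.

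It remains to replace $K_{n,\nu}((t-x)^2)(x)$ by the quantity under the square root in (\ref{eqthm4}). Writing $(t-x)^2=t^2+(-2xt)+x^2\cdot1$ and using subadditivity followed by positive homogeneity (legitimate since $2x\geq0$ and $x^2\geq0$ on $[0,1]$), I would get
\[
K_{n,\nu}((t-x)^2)(x)\leq K_{n,\nu}(t^2)(x)+2x\,K_{n,\nu}(-t)(x)+x^2=:\delta^2.
\]
Choosing this $\delta$ and chaining the inequalities yields $K_{n,\nu}(|t-x|)(x)\leq c\,\delta$, so the bracketed factor becomes $1+c\delta/\delta=c+1$, which is exactly (\ref{eqthm4}). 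The only point needing separate care is the degenerate case $\delta=0$ (possible at $x=0$), where the inequality should be read in the limit $\delta\to0^+$ or handled by continuity. I expect the genuine difficulty to lie in the sign-reversal step of the second paragraph, since that is the only place where the nonlinearity actually obstructs the classical Cauchy--Schwarz reasoning and where the structural hypothesis $\nu\leq c\,\overline{\nu}$ is indispensable.
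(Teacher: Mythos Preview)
Your proposal is correct and follows essentially the same route as the paper's proof: the modulus inequality~(\ref{eqmod}) reduces the problem to bounding $K_{n,\nu}(|t-x|)(x)$, the block-by-block duality argument via Remark~\ref{rem1}$(d)$--$(e)$ converts $|t-x|$ to $-|t-x|$ at the cost of the factor $c$, Lemma~\ref{Lem1} supplies the Cauchy--Schwarz step, and sublinearity expands $(t-x)^2$. The only cosmetic difference is that the paper chooses $\delta=|K_{n,\nu}(-|t-x|)(x)|$ and then bounds this quantity above by the square-root expression (using that $\omega_1$ is nondecreasing), whereas you set $\delta$ equal to the square-root expression directly; both orderings yield the same constant $c+1$.
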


\begin{proof}
For $x$ arbitrarily fixed, we have
\begin{align}
|K_{n,\nu}(f)(x)-f(x)|  &  =|K_{n,\nu}(f)(x)-K_{n,\nu}(f(x))(x)+K_{n,\nu
}(f(x)\cdot1)(x)-f(x)|\label{korovkin}\\
&  \leq|K_{n,\nu}(f(t)-f(x))(x)|+|f(x)|\cdot|K_{n,\nu}(1)(x)-1|\nonumber\\
&  \leq K_{n,\nu}(|f(t)-f(x)|)(x)+|f(x)|\cdot|K_{n,\nu}(1)(x)-1|,\nonumber
\end{align}
where the last inequality follows from the relation (\ref{rh1}).

On the other hand, from the properties of the modulus of continuity, for all
$t\in\lbrack0,1]$ and $\delta>0$, we have
\[
|f(t)-f(x)|\leq\omega_{1}(f;|t-x|)=\omega_{1}\left(  f;\delta\cdot\frac
{|t-x|}{\delta}\right)  \leq\left(  \frac{|t-x|}{\delta}+1\right)  \cdot
\omega_{1}(f;\delta).
\]
Choosing $\delta=|K_{n,\nu}(-|t-x|)(x)|=-K_{n,\nu}(-|t-x|)(x)$ (since
$K_{n,\nu}(-|t-x|)(x)\leq0$), we obtain
\[
|f(t)-f(x)|\leq\left(  \frac{|t-x|}{|K_{n,\nu}(-|t-x|)(x)|}+1\right)
\cdot\omega_{1}(f;|K_{n,\nu}(-|t-x|)(x)|).
\]
Applying to the last inequality the monotone and sublinear operator $K_{n,\nu
}$, we infer that
\begin{multline*}
K_{n,\nu}(|f(t)-f(x)|)(x)\\
\leq\left(  \frac{K_{n,\nu}(|t-x|)(x)}{|K_{n,\nu}(-|t-x|)(x)|}+K_{n,\nu
}(1)(x)\right)  \cdot\omega_{1}(f;|K_{n,\nu}(-|t-x|)(x)|).
\end{multline*}
Combining this fact with the inequality (\ref{korovkin}) we arrive at
\begin{align}
&  |K_{n,\nu}(f(t)-f(x))(x)|\label{eq44}\\
&  \leq\left(  \frac{K_{n,\nu}(|t-x|)(x)}{|K_{n,\nu}(-|t-x|)(x)|}+K_{n,\nu
}(1)(x)\right)  \cdot\omega_{1}(f;|K_{n,\nu}(-|t-x|)(x)|)\nonumber\\
&  +|f(x)|\cdot|K_{n,\nu}(1)(x)-1|.\nonumber
\end{align}

Denote $p_{n,k}(x)={\binom{n}{k}}x^{k}(1-x)^{n-k}$, to simplify the appearance
of formulas. Taking into account that $\nu\leq c\cdot\overline{\nu}$ we infer
from Remark \ref{rem1} $(d)$ and $(e)$ that
\begin{multline*}
K_{n,\nu}(|t-x|)(x)=\sum_{k=0}^{n}p_{n,k}(x)\cdot\frac{(C)\int_{k/(n+1)}%
^{(k+1)/(n+1)}|t-x|d\nu(t)}{\nu([k/(n+1),(k+1)/(n+1)])}\\
\leq c\sum_{k=0}^{n}p_{n,k}(x)\cdot\frac{(C)\int_{k/(n+1)}^{(k+1)/(n+1)}%
|t-x|d\overline{\nu}(t)}{\nu([k/(n+1),(k+1)/(n+1)])}\\
=c\sum_{k=0}^{n}p_{n,k}(x)\cdot\frac{|(C)\int_{k/(n+1)}^{(k+1)/(n+1)}%
-|t-x|d\nu(t)|}{\nu([k/(n+1),(k+1)/(n+1)])}=c\cdot|K_{n,\nu}(-|t-x|)(x)|,
\end{multline*}
which implies $K_{n,\nu}(|t-x|)(x)/|K_{n,\nu}(-|t-x|)(x)|\leq c$.

Now, since $K_{n,\nu}(1)=1$, the inequality stated by Lemma \ref{Lem1}, gives
us
\[
K_{n,\nu}(-|t-x|)(x)\leq\sqrt{K_{n,\nu}((t-x)^{2})(x)}\leq\sqrt{K_{n,\nu
}(t^{2})(x)+2xK_{n,\nu}(-t)(x)+x^{2}}.
\]
Replacing all these in (\ref{eq44}), we immediately obtain the inequality
(\ref{eqthm4}).
\end{proof}

\begin{remark}
\label{rem5} $(a)$ A concrete example of submodular normalized capacity
satisfying Theorem \emph{\ref{thm4}} is $\nu(A)=u\left(  \mathcal{L}%
(A)\right)  $, where $\mathcal{L}$ denotes the Lebesgue measure, $u$ is the
distortion defined by $u(t)=\frac{2t}{t+1}$ and $c=2$. Indeed, $\nu([0,1])=1$
and $\nu(A)=\frac{2\mathcal{L}(A)}{\mathcal{L}(A)+1}$. Denoting $\mathcal{L}%
(A)=x$, we get $\nu(A)=\frac{2x}{x+1}$ and
\[
\overline{\nu}(A)=1-\nu([0,1]\setminus A)=1-\frac{2\mathcal{L}([0,1]\setminus
A)}{\mathcal{L}([0,1]\setminus A)+1}=1-\frac{2(1-x)}{2-x}=\frac{x}{2-x}.
\]
Then, a simple computation shows that $\frac{2x}{x+1}\leq2\cdot\frac{2}{2-x}$
for all $x\in\lbrack0,1].$ Therefore Theorem \emph{\ref{thm4}} holds for $\nu$
when $c=2$.

$(b)$ Theorem \emph{\ref{thm4}} remains valid for submodular and normalized
capacities of the form $\nu(A)=u\left(  \mathcal{L}(A)\right)  $, with $u$ a
nondecreasing, concave function with $u(0)=0$, $u(1)=1$ and a constant
$c\geq1$ such that $u(x)\leq c[1-u(1-x)]$ for all $x\in\lbrack0,1]$.

$(c)$ Theorem \emph{\ref{thm4}} can be easily adapted to the case of
Sz\'{a}sz-Mirakjan-Kan\-to\-ro\-vi\-ch-Cho\-quet operators and
Baskakov-Kan\-to\-ro\-vich-Cho\-quet operators.
\end{remark}

\end{document}